\documentclass[12pt]{amsart}
\usepackage{amsmath,amssymb,latexsym, amsthm, amscd, mathrsfs, stmaryrd} 
\usepackage{hyperref,mathrsfs}

\usepackage[all]{xy}

\usepackage{color}
\newcommand{\nc}{\newcommand}
\nc{\redtext}[1]{\textcolor{red}{#1}}
\nc{\bluetext}[1]{\textcolor{blue}{#1}}
\nc{\greentext}[1]{\textcolor{green}{#1}}
\nc{\red}[1]{\redtext{ #1}}
\nc{\blue}[1]{\bluetext{ #1}}
\nc{\zb}[1]{\redtext{From zb: #1}}

\setlength{\hoffset}{0pt}
\setlength{\voffset}{0pt}
\setlength{\topmargin}{0pt}
\setlength{\oddsidemargin}{0in}
\setlength{\evensidemargin}{0in}
\setlength{\textheight}{8.75in}
\setlength{\textwidth}{6.5in}
\pagestyle{headings}


\newtheorem{thm}{Theorem} 

\newtheorem{prop}[thm]{Proposition}
\newtheorem{conj}[thm]{Conjecture}

\theoremstyle{definition}

\newtheorem{example}[thm]{Example}

\theoremstyle{remark}
\newtheorem{rem}[thm]{Remark}

\numberwithin{equation}{section}

\newcommand{\ba}[1]{
    \begin{array}
    #1
    \end{array}
}

\newcommand{\F}{{\mathbb F}}

\newcommand{\Z}{\mathbb Z}

\newcommand{\aff}{\widehat{\mathfrak{sl}}_2}
\newcommand{\ch}{\text{char }}
\newcommand{\la}{\lambda}
\newcommand{\hf}{\frac12}

\newcommand{\V}{\text{Vir}}

\title[Conjectures on modular representations]{Some conjectures on modular representations of affine $\mathfrak{sl}_2$ and Virasoro algebra}

\keywords{} 
\subjclass{}

\begin{document}

\author[Weiqiang Wang]{Weiqiang Wang}
\address{Department of Mathematics, University of Virginia, Charlottesville, VA 22904}
\email{ww9c@virginia.edu}

\begin{abstract}
We conjecture an explicit bound on the prime characteristic of a field, under which
the Weyl modules of affine $\mathfrak{sl}_2$ and the minimal series modules of Virasoro algebra remain irreducible, and
Goddard-Kent-Olive coset construction for $\aff$ is valid. 
\end{abstract}

\maketitle


\section{Introduction}

This note contains some speculation on modular representations of infinite-dimensional Lie algebras. 
The modular representations of infinite-dimensional Lie algebras are little understood, and in particular 
Lusztig-type conjecture (cf. \cite{Lu}) on irreducible characters in modular representation theory seems to be out of reach in the infinite-dimensional setting for now. 
We hope our explicit conjectures, though modest, might help to stimulate others to continue further in this
challenging new direction.

Let $\F$ be an algebraically closed field of characteristic $p>2$.
We speculate on an explicit bound for the prime characteristic of $\F$ such that 
the Weyl modules of the affine Lie algebra $\aff$ and the minimal series modules of Virasoro algebra
remain irreducible over $\F$.
One remarkable feature is that the two family of modules share the same bound on primes. 
This leads to another conjecture that the Goddard-Kent-Olive coset construction for $\aff$ is
valid under the same bound on $\ch\F$. 

There are two (a priori unrelated) works \cite{DR, Lai} which motivated this note. 
Lai \cite{Lai} constructed nontrivial homomorphisms between Weyl modules of $\aff$ at positive integral levels, and
showed that the modular representations change dramatically once we go beyond level one and there was no obvious conjectural bound
on the prime $p$ for which the Weyl modules remain irreducible. In \cite[Table 2]{Lai}, a list of reducible Weyl modules of $\aff$ (together with
the lowest level $\ell$ detected by the method therein relative to a given prime $p$)
is given, in which one sees the prime $p$ can increase rather quickly relative to the level $\ell$. 

Recall there are  $3$ minimal series modules of Virasoro algebra of central charge $\hf$ over ${\mathbb C}$, of highest weight $0, \hf, \frac{1}{16}$, respectively. 
In \cite{DR}, Dong and Ren showed that the minimal series modules of central charge $\hf$ remain  irreducible over $\F$,
if $\ch F \neq 2, 7$.  
The prime $7$ is bad since the values $\hf$ and $\frac{1}{16}$ coincide in $\F$ of characteristic $7$. 

Our main conjecture is that, under the assumption $\ch \F >{2\ell^2 +\ell -3}$ (for each $\ell \in \Z_{\ge 2}$), the  Weyl modules of $\aff$ of level $\ell$ and
the minimal series modules of Virasoro algebra of central charge $c_\ell$ remain irreducible, and the GKO coset construction is valid. 
This numerical bound ${2\ell^2 +\ell -3}$ ensures the highest weights of the minimal series modules of Virasoro algebra
of central charge $c_\ell$  given by \eqref{eq:cc} are distinct in $\F$ (the setting for \cite{DR} corresponds to $\ell=2$, with $c_2=\hf$ and ${2\ell^2 +\ell -3}=7$). 
We then check this bound does not contradict with constraints coming from $\aff$ in \cite{Lai}.

\section{The conjectures} 

\subsection{On irreducibility of Virasoro minimal series}

Let $\F$ be an algebraically closed field of characteristic $p>2$.
Recall  that the Virasoro algebra is the Lie algebra over $\F$,
$ \V =\F C \oplus \oplus_{n \in \Z}  \F L_n$,
subject to the commutation relations: $C$ is central, and
\begin{align*}
[L_m,L_n] &= (m-n)L_{m+n} + \delta_{m+n,0} \frac12 {m+1 \choose 3}C, \qquad (m,n \in \Z).
\end{align*}
Set $\V_{+} = \bigoplus_{n=1}^{\infty} \F L_n,
 \V_{-} = \bigoplus_{n=1}^{\infty } \F L_{-n}.$
Given  $c, h \in \F$, the Verma module $M_{c,h}$
over $\V$ is a free $U(\V_{-})$-module
generated by 1, such that $\V_{+} \cdot 1 = 0, L_0 \cdot 1 = h 1$ and
$ C\cdot 1 =c  1$. 
Denote by $L_{c,h}$ the unique irreducible quotient $\V$-module of $M_{c,h}$.
The scalar $c$ is called the central charge of $L_{c.h}$. 

For $\ell \in \Z_{\ge 2}$, set 
\begin{equation}
\label{eq:cc}
c_{\ell} = 1- \frac{6}{(\ell+1)(\ell+2)}.
\end{equation}
Note $c_2=\hf, c_3=\frac{7}{10}, c_4 =\frac45$.
For $1\le m\leq \ell, 1\le n\leq \ell+1$, we let
\begin{equation}
\label{eq:h}
h_{m,n} = h_{m,n; \ell} = \frac {\big(m(\ell+2) - n(\ell+1) \big)^2 - 1}{4(\ell+1)(\ell+2)}.
\end{equation}
The scalars $c_\ell$ and $h_{m,n}$ in $\F$ are understood after canceling out common integer factors
in the numerators and denominators, and 
so it is possible that these scalars are well defined even when $\ch \F$ divides $\ell+1$ or $\ell +2$. 
(For example, $c_2 =1 -\frac6{12} =\hf \in \F$ makes sense in characteristic $p=3$.)
Note that $h_{m,n} =h_{\ell+1-m, \ell+2-n}$. To avoid such a double counting, it is well known that one can simply impose
the constraint $n\le m$.
The $\V$-modules
$L_{c_\ell, h_{m,n}}\; (1\le n\le m\leq \ell
)$ are usually referred to as the  (unitary) 
{\em minimal series}.

\begin{conj}
\label{conj:ms}
Let $\ell \in \Z_{\ge 2}$.   Assume $\ch \F >{2\ell^2 +\ell -3}$. 
\begin{enumerate}
\item
The minimal series $\V$-modules $L_{c_\ell, h_{m,n}} \; (\text{for } 1\le n \le m\leq \ell)$ 
over  $\F$  are irreducible. 

\item
Let $L,$ $L'$ be minimal series
$ \V$-modules over $\F$ of the same central charge $c_\ell$. 
Then  $ \mbox{Ext}^1 (L, L') = 0$. 
\end{enumerate}
\end{conj}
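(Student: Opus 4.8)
The plan is to control the contravariant (Shapovalov) form on the Verma module $M_{c_\ell,h_{m,n}}$ and its behaviour under reduction modulo $p$. For any $c,h\in\F$ the form lives on $M_{c,h}$ and $L_{c,h}$ is its radical quotient, so the substance of (1) is that the reduction modulo $p$ of the complex minimal series module stays irreducible, equivalently that $L_{c_\ell,h_{m,n}}$ over $\F$ carries the same (Rocha--Caridi) graded character as over $\C$. At level $N$ the Gram determinant is the Kac product
\begin{equation*}
\det\nolimits_N(c,h)=\kappa_N\prod_{1\le rs\le N}\bigl(h-h_{r,s}(c)\bigr)^{P(N-rs)},
\end{equation*}
with $P$ the partition function, $h_{r,s}(c)$ the Feigin--Fuchs curves (given by \eqref{eq:h} for all $r,s\ge1$) and $\kappa_N\in\Q^\times$. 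Since $\kappa_N$ carries factorial factors, naive reduction of the determinant breaks down at high level for any fixed $p$; the correct instrument is the Jantzen filtration, whose sum formula sees only the $p$-adic valuations of the factors $h-h_{r,s}(c)$ and is blind to $\kappa_N$.

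First I would pin down the threshold. For $p>2\ell^2+\ell-3$ one has $p>\ell+2$, so $4(\ell+1)(\ell+2)$ is a unit and every scalar in \eqref{eq:cc} and \eqref{eq:h} is the honest reduction of a rational lying in the prime field. Writing $a=m(\ell+2)-n(\ell+1)$ one has $1\le a\le\ell^2+\ell-1$ and injectivity of $(m,n)\mapsto a$, and $h_{m,n}=h_{m',n'}$ in $\F$ amounts to $a^2\equiv(a')^2\pmod p$. The two largest admissible values are $\ell^2+\ell-1$ and $\ell^2-2$, attained at $(m,n)=(\ell,1)$ and $(\ell,2)$, so for distinct pairs $0<a\pm a'\le2\ell^2+\ell-3<p$; hence $p\nmid(a-a')(a+a')$ and the weights $h_{m,n}$ $(1\le n\le m\le\ell)$ are pairwise distinct in $\F$. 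This is precisely the stated bound.

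For (1) I would then run the Jantzen filtration of $M_{c_\ell,h_{m,n}}$ over $\Z_{(p)}$, deforming $h$ along a rational curve of central charge $c_\ell$. Over $\C$ the sum formula reproduces the Feigin--Fuchs embedding diagram: every composition factor of the maximal submodule is an orbit module $L_{c_\ell,h'}$, whose label runs through prescribed residue classes modulo $2(\ell+1)(\ell+2)$, and one checks that none of these $h'$ is a Kac-table weight $h_{m',n'}$. Over $\F$ the only new contributions arise from factors $h_{m,n}-h_{r,s}(c_\ell)$ that acquire positive $p$-adic valuation, i.e.\ from accidental congruences $h_{m,n}\equiv h_{r,s}(c_\ell)\pmod p$ with $(r,s)$ outside the complex singular set. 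The task is to show that these extra terms are confined to the maximal submodule and leave the top layer unchanged, so that $\mathrm{ch}\,L_{c_\ell,h_{m,n}}$ is preserved.

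For (2) the usual highest-weight argument reduces a nonsplit extension $0\to L'\to E\to L\to0$ to the requirement that $L'=L_{c_\ell,h_{m',n'}}$ be a top composition factor of the maximal submodule of $M_{c_\ell,h_{m,n}}$, its highest weight therefore appearing among the $L_0$-eigenvalues $h_{m,n}+\Z_{\ge0}$. This is where I expect the real obstacle, and it is strictly finer than (1). Over $\C$ the extensions vanish because the orbit weights filling $\mathrm{rad}\,M_{c_\ell,h_{m,n}}$ are never Kac-table weights, even when two Kac weights differ by an integer (for instance $h_{4,1}-h_{1,1}=3$ at $\ell=4$). Over $\F$, by contrast, all weights and all orbit weights collapse into the prime field $\F_p$, the $L_0$-spectra overlap entirely, and linkage must be excluded by hand: an orbit label $b$ can satisfy $b\equiv\pm a'\pmod p$ and so collide with the Kac weight $h_{m',n'}$. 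The danger is already present for $\ell=2$, $p=11$, where the level-$6$ singular weight of $M_{\hf,0}$ (label $b=17$) meets $h_{2,1}=\hf$ (label $a'=5$) since $17\equiv-5\pmod{11}$. The crux is to show that the bound $p>2\ell^2+\ell-3$ drives every such coincidence into a strictly deeper layer of the embedding diagram, or that its Jantzen contribution cancels, so that no length-two extension between two distinct minimal series modules can occur. A less self-contained but perhaps sturdier route is to grant the note's GKO conjecture and transport irreducibility and $\mathrm{Ext}$-vanishing from the level-$\ell$ Weyl modules of $\aff$, where the contravariant form is more tractable; this trades the problem for another conjecture under the same bound.
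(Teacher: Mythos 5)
First, a point of calibration: the statement you are proving is presented in the paper as a \emph{conjecture}, and the paper offers no proof of it. What the paper does prove is Proposition~\ref{prop:h}, namely that the bound $\ch \F > 2\ell^2+\ell-3$ forces the weights $h_{m,n}$ ($1\le n\le m\le \ell$) to be pairwise distinct in $\F$; beyond that, the only established case of the conjecture is $\ell=2$ (Dong--Ren). Your opening computation with $a=m(\ell+2)-n(\ell+1)$, the range $1\le a\le \ell^2+\ell-1$, injectivity of $(m,n)\mapsto a$, and the estimate $a+a'\le 2\ell^2+\ell-3$ for distinct pairs is correct and is essentially the paper's own proof of Proposition~\ref{prop:h}: the paper factors $\tilde h_{m,n}-\tilde h_{m',n'}=D_{m,n;m',n'}^{\ell,+}D_{m,n;m',n'}^{\ell,-}$ and bounds the second-largest element of the set $B_\ell$, and your $a\pm a'$ are exactly these factors. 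So up to that point you have re-derived the one statement the paper actually proves, by the same method.

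Beyond that, the proposal is a research strategy rather than a proof, and you acknowledge as much. Two essential steps are left open. (i) For irreducibility you must show that every factor $h_{m,n}-h_{r,s}(c_\ell)$ that acquires positive $p$-adic valuation contributes only to the maximal submodule and leaves the top layer of the Jantzen filtration unchanged; no argument is given, and your own example ($\ell=2$, $p=11$, where the level-$6$ singular weight of $M_{\hf,0}$ collides with $h_{2,1}=\hf$ because $17\equiv-5\pmod{11}$) shows that such accidental congruences genuinely occur inside the conjectured range, so the claim cannot be reduced to the absence of new vanishing factors. (ii) For the $\mathrm{Ext}^1$-vanishing you would need to exclude linkage between distinct Kac-table weights modulo $p$, which you correctly identify as strictly harder than part (1) and also leave unresolved; the fallback of importing the statement via the GKO decomposition merely trades Conjecture~\ref{conj:ms} for Conjectures~\ref{conj:Weyl} and~\ref{conj:GKO} under the same bound. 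None of this is a criticism of the plan---a Jantzen/Shapovalov analysis over $\Z_{(p)}$ is a sensible line of attack and goes beyond what the paper attempts---but as written it establishes neither part of the statement.
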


One can further expect that the main theorem of \cite{W} remain valid over $\F$ under the assumption that $\ch \F >{2\ell^2 +\ell -3}$;
that is, the Virasor vertex algebra $L_{c_\ell, 0}$ is rational and has the same fusion rule as in characteristic zero. 
(This is known to hold for $c_2=\hf$ by the work of Dong-Ren.)

\subsection{On irreducibility of Weyl modules for $\aff$}

For basics on modular representations of affine Kac-Moody algebra $\widehat{\mathfrak g}$, we refer to Mathieu \cite{Ma}; also cf. \cite{AW, Lai}. 
The level one Weyl modules of $\widehat{\mathfrak g}$ has been shown to be irreducible under the assumption that $p \geq h$ (the Coxeter number)
by various authors (deConcini-Kac-Kazhdan, Chari-Jing, Brundan-Kleshchev) 
in different ways (though all these approaches are built on the fact that the level one Weyl modules afford an explicit combinatorial/vertex operator
realization); cf. \cite{Lai} for references.

Denote by $X^+_\ell$ the set of dominant integral weights of level $\ell \in \Z_{\ge 1}$. 
Note that $X^+_1 =\{\omega_0, \omega_1\}$ consists of 2 fundamental weights. 
Then $X^+_\ell =\{\la_{\ell;n} = (\ell -n) \omega_0 +n \omega_1 \mid 0 \le n \le \ell \}$.  
We can define the Weyl module $V(\la)$, for $\la \in X^+_\ell$ (for $\ell \in \Z_{>0}$) of the
affine Lie algebra $\aff$ over $\F$ as in \cite{Ma}. 

\begin{conj}
\label{conj:Weyl}
Let $\ell \in \Z_{>0}$. Assume $\ch \F >{2\ell^2 +\ell -3}$. 
\begin{enumerate}
\item
The Weyl modules $V(\la)$ of $\aff$ over $\F$   are irreducible, for $\la \in X^+_\ell$. 

\item
Let $\la, \mu \in X^+_\ell$. 
Then  $ \mbox{Ext}^1 (V(\la), V(\mu)) = 0$. 
\end{enumerate}
\end{conj}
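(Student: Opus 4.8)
The plan is to prove both parts by induction on the level $\ell$, using a characteristic-$p$ incarnation of the Goddard--Kent--Olive coset construction to reduce the statement at level $\ell$ to Conjecture~\ref{conj:ms} (the Virasoro minimal series of central charge $c_\ell$) together with the already-established statements at lower levels. The base case $\ell=1$ is exactly the known level-one irreducibility ($p\ge h=2$ for $\aff$, which holds since $p>2$; note the bound $2\ell^2+\ell-3$ is $0$ at $\ell=1$), together with the elementary fact that the two level-one Weyl modules lie in different blocks. The reason the \emph{same} numerical bound governs the affine and the Virasoro families is built into the coset: the coset Virasoro attached to the embedding of a level-$(\ell-1)$ and a level-one module into their tensor product (with diagonal action at level $\ell$) has central charge
\[
\frac{3(\ell-1)}{\ell+1}+1-\frac{3\ell}{\ell+2}=1-\frac{6}{(\ell+1)(\ell+2)}=c_\ell .
\]
Thus the level-$\ell$ affine data is fed precisely by the $c_\ell$ minimal series, and the threshold $2\ell^2+\ell-3=(\ell-1)(2\ell+3)$ is exactly the point at which two of the weights $h_{m,n}$ of \eqref{eq:h} first coincide in $\F$: writing $k_{m,n}=m(\ell+2)-n(\ell+1)$, the two largest admissible values are $k_{\ell,1}=\ell^2+\ell-1$ and $k_{\ell,2}=\ell^2-2$, and $h_{m,n}-h_{m',n'}$ has numerator $k_{m,n}^2-k_{m',n'}^2=(k_{m,n}-k_{m',n'})(k_{m,n}+k_{m',n'})$, whose factors are bounded over all pairs by $k_{\ell,1}+k_{\ell,2}=2\ell^2+\ell-3$ (for $\ell=2$ this recovers the bad prime $7$ of \cite{DR}).

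Next I would set up the coset over $\F$. Let $V$ be a level-$(\ell-1)$ Weyl module and $W$ a level-one Weyl module, both irreducible by the inductive hypothesis, and form $V\otimes W$. The diagonal copy of $\aff$ acts at level $\ell$, while the Sugawara construction produces commuting Virasoro operators $L_n^{\co}=L_n^{(\ell-1)}+L_n^{(1)}-L_n^{(\ell)}$ of central charge $c_\ell$; all the Sugawara normalizations involve inverting $\ell+1$, $3$, and $\ell+2$, each a unit in $\F$ under $p>2\ell^2+\ell-3$. I would then argue that $V\otimes W$, as a module for the commuting pair ($\aff$ at level $\ell$) $\times$ ($\V$ at $c_\ell$), decomposes with the level-$\ell$ Weyl modules appearing as the affine-isotypic factors paired with the irreducible minimal series $L_{c_\ell,h}$ (irreducible by Conjecture~\ref{conj:ms}, available under the same bound). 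A double-centralizer argument then forces each affine factor $V(\la)$, $\la\in X^+_\ell$, to be irreducible. The combinatorial bookkeeping here is characteristic-free: the Weyl--Kac character of $V(\la)$, the string/branching functions, and the Virasoro minimal-model characters are the same formal power series as over $\C$, so matching multiplicities on both sides of the branching is an identity that survives reduction mod $p$.

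For part (2), once part (1) gives $V(\la)=L(\la)$ irreducible, a nonsplit extension of $V(\la)$ by $V(\mu)$ (for $\la,\mu\in X_\ell^+$) would force $\la$ and $\mu$ into a common block, i.e.\ to share a central (Casimir) character in $\F$. The affine conformal weights $\frac{n(n+2)}{4(\ell+2)}$ attached to $\la_{\ell;n}$ already separate under the much weaker bound $p>2\ell+1$; the binding constraint is again inherited through the coset from the distinctness of the $h_{m,n}$, which renders the relevant block semisimple under $p>2\ell^2+\ell-3$ and yields $\mrm{Ext}^1(V(\la),V(\mu))=0$.

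The main obstacle is the one structural input I have been using freely: that the bimodule $V\otimes W$ genuinely \emph{decomposes} as a direct sum over $\F$, rather than merely carrying a filtration with the expected subquotients. Over $\C$ this rests on complete reducibility of integrable modules, which can fail in characteristic $p$ and is, in effect, equivalent to the semisimplicity we are trying to prove; so the induction as stated risks circularity. I would try to break this by first proving irreducibility of $V(\la)$ directly, via nondegeneracy of the contravariant (Shapovalov) form: $V(\la)$ is irreducible iff the Gram determinant on each weight space is a unit in $\F$, and a Kac--Kazhdan-type analysis should show that, within the integrable range, the only primes dividing these determinants are bounded by $2\ell^2+\ell-3$. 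The delicate point here---and the real heart of the problem---is to control the Gram determinants \emph{uniformly over the infinitely many weight spaces} of the infinite-dimensional module $V(\la)$; this is exactly the uniformity that the finite Virasoro data of the coset supplies for free, which is why I expect the coset route, once its semisimplicity input is secured, to be the more efficient of the two.
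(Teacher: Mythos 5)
The statement you are trying to prove is Conjecture~\ref{conj:Weyl}; the paper offers no proof of it, only supporting evidence (consistency with the reducible Weyl modules tabulated in \cite{Lai}, and a remark that it would follow from a conjectural linkage principle plus a Bruhat-minimality statement). So there is no ``paper proof'' to match, and your proposal should be judged as an attempted proof of an open conjecture. As such it has genuine gaps that you yourself partially identify but do not close. The central one: your induction feeds the level-$\ell$ statement through the decomposition of $V\otimes W$ as a direct sum of irreducible $(\aff|_\ell,\V)$-bimodules over $\F$ --- but that decomposition is precisely Conjecture~\ref{conj:GKO} of the paper, and the irreducibility of the Virasoro factors $L_{c_\ell,h_{m,n}}$ is Conjecture~\ref{conj:ms}; both are open for $\ell\ge 3$. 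The ``characteristic-free character bookkeeping'' does not rescue this: in characteristic $p$ the Weyl module $V(\la)$ has the Weyl--Kac character by construction, but its irreducible quotient may be strictly smaller, so matching formal characters on the two sides of \eqref{eq:GKO} only constrains composition factors of a filtration, not a direct sum decomposition. Your fallback --- nondegeneracy of the Shapovalov form, i.e.\ showing the primes dividing the Gram determinants of all weight spaces of $V(\la)$ are bounded by $2\ell^2+\ell-3$ --- is the honest route, but you state it as a hope rather than carry it out, and controlling infinitely many weight spaces uniformly is exactly the unsolved core of the problem.

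What your proposal does get right, and what aligns with the paper's own motivation, is the conceptual link: the coset central charge computation giving $c_\ell=1-\frac{6}{(\ell+1)(\ell+2)}$ is correct, and your derivation of the threshold $2\ell^2+\ell-3$ as the largest odd value of $k_{m,n}+k_{m',n'}$ reproduces the content of Proposition~\ref{prop:h} (note only that the actual maximum of $|D^{\ell,+}_{m,n;m',n'}|$ is $2\ell^2+2\ell-2$, which the paper discards because it is even; your phrase ``bounded over all pairs by $k_{\ell,1}+k_{\ell,2}$'' glosses over this). One further caution on your part (2): arguing that a nonsplit extension forces a shared ``central (Casimir) character'' separated by the affine conformal weights $\frac{n(n+2)}{4(\ell+2)}$ is too quick --- for affine Kac--Moody algebras in characteristic $p$ the linkage classes are not controlled by a single Casimir eigenvalue, which is why the paper must invoke the conjectural linkage principle of \cite[Conjecture~6.1]{Lai} rather than a center-separation argument.
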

One can rephrase Conjecture~\ref{conj:Weyl} as that the  category of rational representations of level $\ell$ of the Kac-Moody group (associated to $\aff$)
over $\F$ is semisimple if $\ch \F >{2\ell^2 +\ell -3}$.

One can further hope that  under the assumption that $\ch \F >{2\ell^2 +\ell -3}$ the
affine vertex algebra $V(\ell \omega_0)$ is rational and has the same fusion rule as over the complex field ${\mathbb C}$  (which was  
computed by I.~Frenkel and Y.~Zhu).

\subsection{Modular GKO coset construction}

Let $\ell \ge 2$.
Recall the Goddard-Kent-Olive $(\aff |_{\ell -1} \oplus \aff |_1, \aff |_{\ell})$-coset construction  \underline{over ${\mathbb C}$} \cite{GKO}  refers to the following
tensor product decomposition into a direct sum of multiplicity-free irreducible $(\aff |_\ell, \V)$-modules: 
\begin{align}
\begin{split}
\label{eq:GKO}
& V(\la_{\ell-1; n}) \otimes V(\omega_\epsilon)
\\
&\quad = \bigoplus_{\stackrel{0\le j \le n}{j \equiv n+\epsilon \; (\text{mod } 2)}} V(\la_{\ell; j}) \otimes L_{c_\ell, h_{n+1,j+1}}
\bigoplus \bigoplus_{\stackrel{n+1\le j \le \ell}{j \equiv n+\epsilon \; (\text{mod } 2)}} V(\la_{\ell; j}) \otimes L_{c_\ell, h_{\ell-n,\ell+1-j}}
\end{split}
\end{align}
for all $
0\le n\le \ell-1 \text{ and }  \epsilon\in \{0,1\}.$

\begin{conj} [Modular GKO  conjecture]
\label{conj:GKO}
The multiplicity-free  decomposition \eqref{eq:GKO} into a direct sum of irreducible $(\aff |_\ell, \V)$-module is valid over $\F$,
 if $\ch \F > {2\ell^2 +\ell -3}$. 
\end{conj}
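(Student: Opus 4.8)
The plan is to run the Sugawara coset construction formally over $\F$ and then to promote the resulting character identity to an honest bimodule decomposition, using the irreducibility and $\mathrm{Ext}^1$-vanishing predicted by Conjectures~\ref{conj:ms} and~\ref{conj:Weyl}.

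First I would construct the coset Virasoro action over $\F$. Writing $L_n^{(k)}$ for the normal-ordered Sugawara operators attached to $\aff$ at level $k$, whose definition divides by $k + h^\vee = k+2$ (the dual Coxeter number of $\mathfrak{sl}_2$ being $2$), one sets
\[
L_n^{\co} = L_n^{(\ell-1)} + L_n^{(1)} - L_n^{(\ell)}
\]
acting on $V(\la_{\ell-1; n}) \otimes V(\omega_\epsilon)$. The standard formal manipulations show that the $L_n^{\co}$ satisfy the Virasoro relations with central charge $c_\ell$ as in~\eqref{eq:cc} and commute with the diagonal $\aff|_\ell$. The only denominators entering these computations are $\ell+1$, $3$, and $\ell+2$, all of which are invertible once $\ch\F > 2\ell^2+\ell-3$; hence the construction survives verbatim in characteristic $p$, and the tensor product becomes an $(\aff|_\ell, \V)$-module.

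Second, I would establish the underlying character identity over $\F$. The Weyl modules $V(\la)$ have $\Z$-graded characters that do not depend on $\ch\F$, so the formal branching of $V(\la_{\ell-1;n}) \otimes V(\omega_\epsilon)$ with respect to the commuting pair $(\aff|_\ell, \V)$ agrees termwise with the characteristic-zero computation of~\cite{GKO}. This identifies, for each admissible $j$, the graded character of the $\V$-multiplicity space of $V(\la_{\ell;j})$ with the character of the prescribed minimal series module $L_{c_\ell, h_{n+1,j+1}}$ or $L_{c_\ell, h_{\ell-n, \ell+1-j}}$, with the weights $h_{m,n}$ from~\eqref{eq:h} remaining pairwise distinct in $\F$ under the bound. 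Thus the composition factors of the tensor product are pinned down to be exactly the summands on the right-hand side of~\eqref{eq:GKO}.

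Finally, I would upgrade this to the asserted direct sum. By Conjecture~\ref{conj:Weyl}(1) the modules $V(\la_{\ell;j})$ are irreducible and by Conjecture~\ref{conj:ms}(1) the $L_{c_\ell, h}$ are irreducible, so each external tensor product on the right of~\eqref{eq:GKO} is an irreducible $(\aff|_\ell, \V)$-module. The vanishing of $\mathrm{Ext}^1$ between Weyl modules, Conjecture~\ref{conj:Weyl}(2), and between minimal series modules of the common central charge $c_\ell$, Conjecture~\ref{conj:ms}(2), then combine through a K\"unneth argument to force $\mathrm{Ext}^1$ between any two of these summands to vanish. A module all of whose composition factors are irreducible and between which there are no nontrivial extensions is semisimple, so the tensor product splits as the multiplicity-free direct sum in~\eqref{eq:GKO}. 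The hardest part will be the passage between the factorwise and the joint categories at the two ends of the argument: on one side, verifying that the Sugawara normal-ordering and the central-charge computation introduce no denominators beyond $\ell+1$, $3$, $\ell+2$, so that $L_n^{\co}$ genuinely acts over $\F$; on the other, showing that the factorwise $\mathrm{Ext}^1$-vanishing really does control $\mathrm{Ext}^1$ in the category of $(\aff|_\ell, \V)$-modules, for which a clean K\"unneth formula in this infinite-dimensional, non-semisimple setting is not automatic and would have to be set up carefully.
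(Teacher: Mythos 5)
There is no ``paper's own proof'' to compare against here: the statement you are proving is Conjecture~\ref{conj:GKO}, one of the paper's main \emph{conjectures}. The paper offers no argument for it beyond heuristic evidence, namely the numerical consistency between the bound of Proposition~\ref{prop:h} on $\V|_{c_\ell}$-bad primes and the data on reducible Weyl modules extracted from \cite[Table~2]{Lai} in Table~1. Your proposal is therefore not a proof of the statement in any case, because it is openly conditional on Conjectures~\ref{conj:ms} and~\ref{conj:Weyl}, which are themselves unproved conjectures of the same paper. At best you have sketched an implication ``(\ref{conj:ms}) and (\ref{conj:Weyl}) $\Rightarrow$ (\ref{conj:GKO})'', which inverts the logical role the paper assigns to Conjecture~\ref{conj:GKO} (the paper uses it as a conceptual bridge \emph{explaining} why \ref{conj:ms} and \ref{conj:Weyl} share a bound, and derives consequences \emph{from} it, e.g.\ the partial semisimplicity remark).

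Even granting the two input conjectures, the middle step of your argument has a genuine gap. It is true that the graded character of $V(\la_{\ell-1;n})\otimes V(\omega_\epsilon)$ is characteristic-independent, and the Sugawara/coset operators $L_n^{\co}$ make sense over $\F$ since $3$, $\ell+1$, $\ell+2$ are invertible under the stated bound. But a character identity for the whole tensor product does not ``pin down the composition factors'' of the $\V$-multiplicity spaces unless you already know that those composition factors lie among the minimal series $L_{c_\ell,h_{m,n}}$ and that the characters of these irreducibles over $\F$ coincide with their characteristic-zero characters. Over $\C$ the first point is supplied by unitarity (positive-definiteness of the contravariant form forces the coset Virasoro module to be a sum of unitary minimal series), and this tool has no counterpart over $\F$; the second point is essentially Conjecture~\ref{conj:ms}(1) restated, and moreover an arbitrary $L_0$-eigenvalue of the coset action need not a priori equal some $h_{m,n}$ in $\F$. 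Without an $\F$-substitute for the unitarity argument, the passage from the character identity to the identification of multiplicity spaces is unjustified, and the final K\"unneth/$\mathrm{Ext}^1$ step (which you rightly flag as delicate) never gets a correctly identified list of composition factors to work with.
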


\begin{rem}
The following ``partial semisimple tensor product" statement is a 
consequence of  Conjectures~\ref{conj:Weyl} and ~\ref{conj:GKO}:
{\em 
Assume $\ch \F >{2\ell^2 +\ell -3}$.  For any positive integers $\ell_1, \ell_2$ such that $\ell_1 +\ell_2 \le \ell$
and any $\la \in X^+_{\ell_1}, \mu \in X^+_{\ell_2}$, $V(\la) \otimes V(\mu)$ is a semisimple $\aff$-module. 
}
\end{rem}

\section{Evidence and generalizations}

\subsection{Supporting evidence for Conjecture~\ref{conj:ms}}

Recall $h_{m,n}$ from \eqref{eq:h}.
A prime $p$ is called  
{\bf $\bf \V |_{c_\ell}$-good}  if the scalars $h_{m,n}$
$(1\le n\le m\leq \ell)$ are pairwise distinct  
(and hence there are $\ell(\ell+1)/2$ distinct values of such $h_{m,n}$); otherwise, a prime $p$ is called {\bf $\bf \V |_{c_\ell}$-bad}.

\begin{prop} 
\label{prop:h}
Every prime greater than  
${2\ell^2 +\ell -3}$ are $\V |_{c_\ell}$-good.
Equivalently, every $\V |_{c_\ell}$-bad prime does not exceed ${2\ell^2 +\ell -3}$. 
\end{prop}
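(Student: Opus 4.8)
The plan is to reduce the pairwise distinctness of the $h_{m,n}$ to an elementary estimate on the integers $a_{m,n} := m(\ell+2) - n(\ell+1)$, in terms of which \eqref{eq:h} reads $h_{m,n} = \frac{a_{m,n}^2 - 1}{4(\ell+1)(\ell+2)}$. First I would observe that $2\ell^2+\ell-3 > \ell+2$ for all $\ell \ge 2$, so any prime $p > 2\ell^2+\ell-3$ satisfies $p > \ell+2 > 2$; hence $4(\ell+1)(\ell+2)$ is a unit in $\F$ and no cancellation subtlety arises. Consequently $h_{m,n} = h_{m',n'}$ in $\F$ if and only if $a_{m,n}^2 \equiv a_{m',n'}^2 \pmod p$, i.e. $a_{m,n} \equiv \pm a_{m',n'} \pmod p$.

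Next I would record two facts about the integers $a_{m,n}$ over the index set $1 \le n \le m \le \ell$. The assignment $(m,n) \mapsto a_{m,n}$ is injective: if $a_{m,n} = a_{m',n'}$ then $(m-m')(\ell+2) = (n-n')(\ell+1)$, and since $\gcd(\ell+1,\ell+2)=1$ and $|m-m'| \le \ell-1 < \ell+1$ this forces $m=m'$, whence $n=n'$. A short optimization of $a_{m,n} = (m-n)\ell + (2m-n)$ shows that $a_{m,n}$ takes positive values lying in $[1,\ell^2+\ell-1]$, with the largest value $\ell^2+\ell-1$ attained only at $(m,n)=(\ell,1)$ and the second largest value $\ell^2-2$ attained at $(m,n)=(\ell,2)$; in particular the sum of the two largest values equals $(\ell^2+\ell-1)+(\ell^2-2) = 2\ell^2+\ell-3$.

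With these in hand the two cases are immediate. If $a_{m,n} \equiv a_{m',n'} \pmod p$, then since both lie in $[1,\ell^2+\ell-1]$ their difference has absolute value at most $\ell^2+\ell-2 < p$, forcing $a_{m,n} = a_{m',n'}$ and hence $(m,n)=(m',n')$ by injectivity. If instead $a_{m,n} \equiv -a_{m',n'} \pmod p$ with $(m,n) \neq (m',n')$, then $p \mid (a_{m,n}+a_{m',n'})$; but $a_{m,n}+a_{m',n'}$ is a sum of two distinct positive values of $a$, so $2 \le a_{m,n}+a_{m',n'} \le 2\ell^2+\ell-3 < p$, a contradiction. Thus the $h_{m,n}$ are pairwise distinct and $p$ is $\V|_{c_\ell}$-good.

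I expect the only genuine work to be the combinatorial optimization identifying the two largest values of $a_{m,n}$ and confirming their sum is exactly $2\ell^2+\ell-3$, since this is what pins down the precise threshold; concretely, one must check that no value of $a_{m,n}$ lies strictly between $\ell^2-2$ and $\ell^2+\ell-1$, a gap cleared by treating $m=\ell$ separately from $m \le \ell-1$ (for which $a_{m,n} \le \ell^2-3$). Everything else is the coprimality argument and two range estimates, all routine once the denominator is known to be invertible.
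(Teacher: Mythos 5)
Your proof is correct and takes essentially the same route as the paper's: both reduce to the factorization $\tilde h_{m,n}-\tilde h_{m',n'} = \bigl(a_{m,n}+a_{m',n'}\bigr)\bigl(a_{m,n}-a_{m',n'}\bigr)$ (your $a_{m,n}\pm a_{m',n'}$ are exactly the paper's $D^{\ell,\pm}_{m,n;m',n'}$) and then bound both factors, with the threshold $2\ell^2+\ell-3$ emerging as the sum of the two largest values of $a_{m,n}$. The only cosmetic difference is that you impose $n\le m$ from the outset, so the even extremal value $2(\ell^2+\ell-1)$, which the paper discards separately by parity, never enters your estimate.
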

Note $2\ell^2 +\ell -3 =(2\ell+3)(\ell-1)$ is a prime  only when $\ell=2$. 

\begin{proof}
Let $1\le m, m' \leq \ell, 1\le n, n' \leq \ell+1$. 
Denote the numerator of $h_{m,n}$ by 
$$\tilde h_{m,n}=  \big(m(\ell+2) -n(\ell+1) \big)^2 - 1,
$$
and denote 
 \begin{equation}
 \label{Dpm}
 D_{m,n;m',n'}^{\ell, \pm} = (m \pm m')(\ell+2) - (n\pm n')(\ell+1).
 \end{equation}
Then
$$\tilde h_{m,n}  - \tilde h_{m',n'} = D_{m,n;m',n'}^{\ell, +} D_{m,n;m',n'}^{\ell, -}.
$$
Hence $\tilde h_{m,n}  = \tilde h_{m',n'}$ if and only if 
\Big((i) $D_{m,n;m',n'}^{\ell, +} =0 $ or (ii) $D_{m,n;m',n'}^{\ell, -} =0$\Big).
Let us consider (i) and (ii) as equations over $\Z$ for now. 
Assume  (i) holds.
Since $\{\ell+1, \ell+2\}$ are relatively prime, we have $(\ell+2) \mid (n+n')$ and $(\ell+1) \mid (m+m')$,  which further imply
that $n+n' =\ell+2$ and $m+m' =\ell+1$, respectively (recall $m,m' \le \ell$,  and $n, n' \le \ell+1$). 
Similarly, (ii) holds imply $m=m'$ and $n=n'$. 

Denote
\begin{equation}
 \label{eq:setD}
B_\ell =\Big\{\big |D_{m,n;m',n'}^{\ell, +}\big| \; \Big\vert  \; 1\le m, m' \leq \ell, 1\le n, n' \leq \ell+1 \Big \}.
\end{equation}
(A set defined using $|D_{m,n;m',n'}^{\ell, -}|$ instead is equivalent to $B_\ell$
thanks to $\tilde h_{m,n} =\tilde h_{\ell+1-m, \ell+2-n}$.)
Note the maximal value in $B_\ell$ is achieved at $(\ell+\ell) (\ell+2) - (1+1)(\ell+1) =2(\ell^2 +\ell-1)$ which is manifestly even;
the second largest absolute value is achieved at 
$(\ell+\ell) (\ell+2) - (1+2)(\ell+1) =2\ell^2 +\ell -3$; so
under the assumption $\ch\F>2\ell^2 +\ell -3$, all values of $\tilde h_{m,n}$ for $1\le n \le m \le \ell$ are distinct. 
\end{proof}

Conjecture~\ref{conj:ms} is known to hold when $c_2=\frac12$ (i.e., $\ell=2$) \cite{DR}. 
A basic observation of \cite{DR} can be rephrased  that
the $\V |_{c_2}$-bad primes are $\{2, 7\}$.
Note that the $\V |_{c_2}$-good primes $p=3, 5$  are not detected by Proposition~\ref{prop:h}; see Remark~\ref{rem:except} below for an explanation of these missing primes.

\subsection{Supporting evidence for Conjecture~\ref{conj:Weyl}}

Note \cite[Table 2]{Lai} provides a list of reducible Weyl modules (detected by the approach therein) of lowest levels $\ell$ at a given prime $p$.
Table~1 below is a somewhat novel look at the data provided in \cite[Table 2]{Lai}, and it indicates the maximal known prime $p$ for which
there exists a reducible Weyl module at level $\ell \ge 2$. 

\begin{table}[!h]
\caption{Maximal known primes $p$ for reducible Weyl modules at  level $\ell$}
\[
\ba{{|c|ccccccc|}
\hline
\ell &2 & 3 &4 &5 &6  &7  &8 \\
\hline
p   &3 & 13&11&23 &37  & 47  & 53 \\
\hline
2\ell^2 +\ell-3   &7& 18&33&52 &75  & 102  & 133 \\
\hline
}
\]
\end{table}
We note that $p<2\ell^2 +\ell-3$ for all $\ell$ in the table, and so Conjecture~\ref{conj:Weyl} is consistent with the
results of \cite{Lai}. 

\begin{rem}
Lai and the author have formulated a (conjectural) linkage principle; see \cite[Conjecture~6.1]{Lai}. If one can show that the weights in $X^+_\ell$ are minimal 
in the Bruhat order in each linkage class assuming $\ch \F >{2\ell^2 +\ell -3}$, then Conjecture~\ref{conj:Weyl} would follow (modulo
the linkage principle conjecture).
\end{rem}

\subsection{Supporting evidence for Conjecture~\ref{conj:GKO}}

The evidence from Table~1 (based on \cite[Table~2]{Lai})  for affine algebra $\aff$
is remarkably consistent with the constraints from Proposition~\ref{prop:h} for Virasoro algebra.
Conjecture~\ref{conj:GKO} offers a reasonable and conceptual way of explaining such a coincidence, and
it helps to relate  Conjecture~\ref{conj:Weyl}  and Conjecture~\ref{conj:ms}.
 
\subsection{More precise bound on $\ch \F$ for Virasoro algebra}

One could make a conjecture (which strengthens Conjecture~\ref{conj:ms}) that whenever $\ch\F$ is a $\V|_{c_\ell}$-good prime
the minimal series $L_{c_\ell, h_{m,n}}$  are irreducible. 
One drawback of this stronger conjectural bound of $\ch\F$ 
is that a precise description of the set of $\V|_{c_\ell}$-bad primes is difficult for general $\ell \ge 2$,
in contrast to the explicit though coarser bound in Proposition~\ref{prop:h}. 
We will provide some partial answer below. The $\V|_{c_\ell}$-bad primes for small values of $\ell$ can be computed by hand. 

\begin{example}
The $\V|_{c_2}$-bad primes are  $\{2, 7\}$ (i.e., the primes $\le 2\ell^2+\ell -3=7$ except $3, 5$).
 
The $\V|_{c_3}$-bad primes are  $\{2, 3,  7, 9,  13, 17\}$ (i.e., primes $\le 2\ell^2+\ell -3=18$ except $5, 11$).
 
 The $\V|_{c_4}$-bad primes are  
\{all primes $\le 2\ell^2+\ell -3=33\} \backslash \{5, 19, 29, 31\}$.

The $\V|_{c_5}$-bad primes are 
\{all primes $\le 2\ell^2+\ell -3=52\} \backslash \{7, 29, 41, 43, 47\}$.

The $\V|_{c_6}$-bad primes are \{all primes  $\le 2\ell^2+\ell -3=75 \} \backslash \{7, 41, 71, 73\}$.
\end{example}

For integers $a, b$ with $a\le b$, denote by $[a,b]$ the interval of integers between $a$ and $b$.
 Set 
 $$B_\ell(a) =[\ell^2+\ell +a(\ell+2), \ell^2+ 2\ell -1 +a(\ell+1) ], \quad \text{ for } 0\le a \le \ell-1.
 $$
Note $k <k'$ for all $k \in B_\ell(a), k' \in B_\ell(a')$ whenever $a<a'$. 

 \begin{prop}
 \label{prop:X}
The set $B_\ell$ \eqref{eq:setD} is given by
$$B_\ell =[1, \ell^2+\ell-2] \bigcup B_\ell(0) \bigcup B_\ell(1) \bigcup \cdots \bigcup  B_\ell(\ell-1).
$$ 
\end{prop}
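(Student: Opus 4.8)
The plan is to characterize the set $B_\ell$ explicitly by analyzing the possible values of $D_{m,n;m',n'}^{\ell,+} = (m+m')(\ell+2) - (n+n')(\ell+1)$ as $(m,m',n,n')$ range over $1\le m,m'\le \ell$, $1\le n,n'\le \ell+1$. The key observation is that this quantity depends only on the two sums $s := m+m'$ and $t := n+n'$, so I would set $s \in [2, 2\ell]$ and $t \in [2, 2\ell+2]$ and study the function $f(s,t) = s(\ell+2) - t(\ell+1)$. Since the map $(m,m')\mapsto s$ is surjective onto $[2,2\ell]$ and likewise $(n,n')\mapsto t$ is surjective onto $[2,2\ell+2]$, the set $B_\ell$ equals $\{\,|f(s,t)| : s\in[2,2\ell],\ t\in[2,2\ell+2]\,\}$.

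**Next I would organize the computation by the value of $s$.** Writing $s = 2 + (\ell+1)a + r$ appropriately, or more directly fixing $s$ and letting $t$ vary, the quantity $f(s,t)$ runs over an arithmetic progression in $t$ with common difference $-(\ell+1)$. Thus for each fixed $s$, the values $f(s,t)$ form an interval of integers spaced by $\ell+1$, from $f(s,2) = s(\ell+2) - 2(\ell+1)$ down to $f(s, 2\ell+2) = s(\ell+2) - (2\ell+2)(\ell+1)$. Taking absolute values and unioning over all $s$, I expect the small values $[1,\ell^2+\ell-2]$ to arise as the ``overlap region'' where consecutive $s$-slices fill in every integer, while the large isolated blocks $B_\ell(a)$ correspond to the top ends of each slice where the spacing $\ell+1$ leaves gaps. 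The parametrization $B_\ell(a) = [\ell^2+\ell+a(\ell+2),\, \ell^2+2\ell-1+a(\ell+1)]$ should emerge by matching $a$ to a residue class of $s$ modulo the relevant modulus and computing the endpoints directly; I would verify the stated endpoints by plugging in the extremal choices of $(m,m',n,n')$.

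**The main obstacle** will be showing the \emph{exhaustiveness and non-redundancy} of the decomposition: that the dense initial interval $[1,\ell^2+\ell-2]$ together with the blocks $B_\ell(a)$ covers every attainable value and that nothing outside these ranges occurs. This requires a careful case analysis to confirm that (i) every integer in $[1,\ell^2+\ell-2]$ is hit (using that for small target values one can solve $s(\ell+2)-t(\ell+1) = \pm v$ in the allowed ranges, exploiting $\gcd(\ell+1,\ell+2)=1$ from the proof of Proposition~\ref{prop:h}), and (ii) the gaps between blocks genuinely contain no attainable values, so the $B_\ell(a)$ are truly separated as the remark following their definition asserts. I would handle (ii) by the spacing argument: within each $s$-slice the step is $\ell+1$, and a short estimate shows that after taking absolute values the values landing above $\ell^2+\ell-2$ cannot fall into the forbidden gaps. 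The remaining routine verification is matching the boundary index $a\le \ell-1$ to the constraint $s\le 2\ell$, which caps the number of high blocks at $\ell$.
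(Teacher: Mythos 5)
Your reduction to the two sums $s=m+m'\in[2,2\ell]$ and $t=n+n'\in[2,2\ell+2]$ and the function $f(s,t)=s(\ell+2)-t(\ell+1)$ is the right starting point and matches the paper's setup, which arranges exactly this data as a $(2\ell-1)\times(2\ell+1)$ matrix with rows indexed by $s$ and columns by $t$. The difference is in how you slice it. You slice by rows (fixed $s$), where the values form an arithmetic progression of step $\ell+1$; the paper slices by diagonals $r=t-s$, along which $f(s,t)=s-r(\ell+1)$ runs through \emph{consecutive} integers as $s$ varies. The diagonal slicing is what makes the proposition nearly self-evident: each diagonal contributes an interval of consecutive integers, the diagonals with $r=1,0,2,-1,3,-2,\ldots$ concatenate (after using the $180$-degree rotational symmetry of the matrix) to give exactly $[1,\ell^2+\ell-2]$, and each of the remaining $\ell$ diagonals is precisely one block $B_\ell(a)$. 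In your row slicing, by contrast, each block $B_\ell(a)$ --- which consists of $\ell-a$ consecutive integers --- must be assembled from $\ell-a$ different $s$-slices, one element from each, and the coverage of $[1,\ell^2+\ell-2]$ becomes a folding argument over progressions of step $\ell+1$ with varying offsets after taking absolute values.

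That is where the genuine gap lies: the two verifications you yourself flag as ``the main obstacle'' --- that every integer in $[1,\ell^2+\ell-2]$ is attained, and that no attained value falls strictly between consecutive blocks --- are the entire content of the proposition, and the proposal only gestures at them (``a short estimate shows'', ``should emerge by matching $a$ to a residue class''). Concretely, solving $s(\ell+2)-t(\ell+1)=\pm v$ forces $s\equiv \pm v \pmod{\ell+1}$, and since $[2,2\ell]$ contains only $2\ell-1$ integers some residue classes admit a single $s$, so one must still check that the corresponding $t=(s(\ell+2)\mp v)/(\ell+1)$ lands in $[2,2\ell+2]$; none of this is carried out. I recommend switching to the diagonal parametrization $r=t-s$, which turns both verifications into one-line computations of the endpoints of each diagonal. (A minor side remark: in your parametrization $0=|f(\ell+1,\ell+2)|$ is attained, so strictly $0\in B_\ell$; the proposition's statement, like the paper's example matrix, tacitly discards it since it only reflects the symmetry $\tilde h_{m,n}=\tilde h_{\ell+1-m,\ell+2-n}$.)
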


\begin{proof}
Recall $B_\ell$ is defined using $|D_{m,n;m',n'}^{\ell, +}|$; cf. \eqref{Dpm}.
We first list the values of $(n+n')(\ell+1)$, for $1\le n,n' \le \ell+1$, in an increasing row  (there are $(2\ell+1)$ entries), 
and list the values of $(m+m')(\ell+2)$, for $1\le m,m' \le \ell$, in an increasing column (there are $(2\ell-1)$ entries).
By taking the absolute value of the difference of row and column entries, 
one produces a $(2\ell-1) \times (2\ell+1)$ matrix $A$ whose entries are given by $|D_{m,n;m',n'}^{\ell, +}| = |(m + m')(\ell+2) - (n+ n')(\ell+1)|.$
One observes that the matrix is symmetric under rotation by 180 degrees so we only need to consider the $(2\ell-1) \times (\ell+1)$ submatrix, denoted by $D$,
which consists of the $(\ell+1)$ columns of $A$. 
By listing the entries at the $r$th diagonals of $D$ in the following order:  $r=1, 0, 2, -1, 3, -2, \ldots, 2-\ell, \ell$, one obtains
exactly the interval $[1, \ell^2+\ell-2]$. (This is not so miraculous by noting the following: the values in each diagonal form a natural sequence, 
 the last column of $D$ is symmetric by flipping.) Now we are left with the lower $\ell$ diagonals of $D$, whose values are given by
 the $\ell$ intervals $B_\ell(a)$, for $0\le a \le \ell-1$.

The easiest way for a reader to convince herself/himself of the above proof is to work out an example for one particular $\ell$. For example let $\ell=5$.
Following the recipe in the proof above, we obtain  the initial row and column vectors to be
$(12, 18, 24, 30, 36, 42, \ldots, 72)$ and $(14, 21, 28, 35, 42, 49, 56,63, 70)^t$. This leads to the following matrix
$$
D=\begin{bmatrix}
   2 &{\bf 4}    &10   &{\bf 16}    &22   &{\bf 28} \\
   {\bf 9} &3    &{\bf 3}     &9      &{\bf 15}    &21  \\
16  &{\bf 10}  &4     &{\bf 2}     &8      &{\bf 14}  \\
 {\bf 23} &17  &{\bf 11}   &5     &{\bf 1}      &7  \\
 30 &{\bf 24}  &18   &{\bf 12}   &6     &{\bf 0}  \\
 {\bf 37} &31  & {\bf 25}   &19   & {\bf 13}   &7  \\
 44 & {\bf 38}  &32   &  {\bf 26}  &20   & {\bf 14}  \\
  {\bf 51} &45  & {\bf 39}   &33   & {\bf 27}   &21  \\
58  &{\bf 52}  &46   & {\bf 40}   &34   & {\bf 28}
\end{bmatrix}.
$$
Then one sees clearly that the above recipe leads to the statement in the proposition. 
\end{proof}

Note that $B_\ell (\ell-1) =\{2(\ell^2 +\ell -1)\}$ consists of  the largest integer in  $B_\ell$; moreover
$2\ell^2+\ell-3 \in B_{\ell}(\ell-2)$ is the  largest integer in  $B_\ell' :=B_\ell \backslash B_{\ell}(\ell-1)$, or the second largest integer in  $B_\ell$. 
By definition of $B_\ell$ and Proposition~\ref{prop:h},
we have 
\begin{equation}
\label{primeX}
\{\V |_{c_\ell}\text{-bad primes} \} \subseteq [1,  {2\ell^2 +\ell -3}] \cap B_\ell.
\end{equation}
One may regard \eqref{primeX} as a sharper form of the description of the bound in Proposition~\ref{prop:h}, thanks
to the concrete description of the set $B_\ell$ in  Proposition~\ref{prop:X}.

\begin{rem}
\label{rem:except}
Note $(\ell+1)^2 \not \in B_\ell$ and $(\ell+2)^2 \not \in B_\ell$.
Recall the denominator for $h_{m,n}$ is $4(\ell+1)(\ell+2)$. 
It follows that if either $\ell+1$ or $\ell+2$ happens to be a prime it must be a $\V |_{c_\ell}$-good prime.
In the above examples for small $\ell$, this prime happens to be the smallest $\V |_{c_\ell}$-good prime,
 and the second $\V |_{c_\ell}$-good prime happens to be $\ell^2+\ell-1$ (where $\ell^2+\ell-1$ happens to be a prime).
 \end{rem}
 
 Introduce $G_\ell = [1, 2\ell^2+\ell-3] \backslash B_\ell$.
If follows by definition that the set of $\V |_{c_\ell}$-good primes $\leq 2\ell^2+\ell-3$ is contained in $G_\ell$ 
(recall all primes $>2\ell^2+ \ell-3$ are $\V |_{c_\ell}$-good by Proposition~\ref{prop:h}).
Set 
$$G_\ell(a) =[\ell^2+\ell -1+a(\ell+1), \ell^2+ \ell -1 +a(\ell+2) ], \quad \text{ for } 0\le a \le \ell-1.
$$
Note $k <k'$ for all $k \in G_\ell(a), k' \in G_\ell (a')$ whenever $a<a'$. 
Then one derives by definition and Proposition~\ref{prop:X} that
$$G_\ell = G_\ell(0) \bigcup G_\ell (1) \bigcup \cdots \bigcup  G_\ell (\ell-1).
$$

\subsection{Discussions} 
We indicate below several possible generalizations, provided the conjectures of this paper are valid.

One can ask the same question in Conjecture~\ref{conj:ms}  for non-unitary minimal series of Virasoro algebra. 
An analysis analogous to Proposition~\ref{prop:h} should allow one to find a conjectural bound on the characteristic of the field $\F$ 
over which the non-unitary minimal series of Virasoro algebra remain irreducible. 

It would be interesting to generalize the conjectures of this paper to affine algebras of higher ranks and W-algebras 
(in particular for $\widehat{\mathfrak{sl}}_n$ and the corresponding $W$-algebra $W_n$).
One could  try to 
find a conjectural bound on $\ch\F$ under which the Weyl modules are irreducible,   by analyzing the highest weights of the unitary minimal series 
of the W-algebras \cite{FKW}. 
One can see by a similar method as in Proposition~\ref{prop:h} this bound should be some quadratic polynomial on the level and the rank.
One can also derive some more evidence from \cite[Theorem~5.10]{Lai} on
$\ch\F$ for reducible Weyl modules of affine Lie algebras of higher rank (say, $\widehat{\mathfrak{sl}}_3$). 
Then one would check if the bound arising from W-algebra minimal series is compatible with the bound from
affine Lie algebras.


Instead of Virasoro algebra, one can consider the super-Virasoro algebra, also known as Neveu-Schwarz (and Ramond) algebras, and
its unitary minimal series. A similar analysis can lead to a conjectural bound on $\ch\F$ under which 
the minimal series of the super-Virasoro algebra are irreducible. 
In the same way that affine Lie algebra $\aff$ is related to Virasoro algebra via the GKO construction,
the Neveu-Schwarz algebra is related to the affine Lie superalgebra $\widehat{\mathfrak{osp}}_{1|2}$.
So we can give a conjectural bound on $\ch\F$ (relative to the levels) under which   the  Weyl modules of 
$\widehat{\mathfrak{osp}}_{1|2}$ are irreducible. 

 \vspace{.3cm}
 
 {\bf Acknowledgement.} I thank Chongying Dong for stimulating discussion on \cite{DR} and Chun-Ju Lai
 from whom I learned about \cite{Lai}  firsthand; this note is influenced by both works.  
The author  is partially supported by the NSF grant DMS-1405131.



\begin{thebibliography}{99}
\baselineskip 11.5pt

\bibitem 
{AW} Arakawa T, Wang W.
{\em Modular affine vertex algebras and baby Wakimoto modules}.
Proceedings for Southeastern Lie workshops (eds. B. Parshall, K. Misra, D. Nakano), Proc. Symp. Pure Math.  2016, 92: 1--15,  arXiv:1503.00395

\bibitem 
{DR} Dong C, Ren L.
{\em Vertex operator algebras associated to the Virasoro algebra over an arbitrary field}.  Trans. Amer. Math. Soc.  2016, 368:  5177--5196
 
\bibitem 
{FKW}  Frenkel E,  Kac V,  Wakimoto M.
{\em Characters and fusion rules for $W$-algebras via quantized Drinfeld-Sokolov reduction}.
Comm. Math. Phys. 1992, 147:  295--328

 \bibitem 
 {GKO} Goddard P, Kent A, Olive D.
 { \em Unitary representations of the Virasoro and super-Virasoro algebras}. Commun. Math. Phys.  1986, 103: 105--119


\bibitem
{Lai} Lai C.
{\em On Weyl modules over affine Lie algebras in prime characteristic}.   Transform. Groups  2016, 21: 1123--1153


\bibitem 
{Lu} Lusztig G.
{\em Some problems in the representation theory of finite Chevalley groups}.
Proc. Symp. Pure Math.,  Amer. Math. Soc., 
1980, 37: 313--317

\bibitem
{Ma} Mathieu O.
{\em On some modular representations of affine Kac-Moody algebras at the critical level}.
 Compositio Math.  1996, 102: 305--312

\bibitem 
{W} Wang W.
{ \em Rationality of Virasoro vertex operator algebras}.
 Internat. Math. Res. Notices   (1993),  7: 197--211

\end{thebibliography}
\end{document}